\documentclass[12pt]{amsart}

\usepackage[T1]{fontenc}
\usepackage[utf8]{inputenc}
\usepackage{lmodern}

\usepackage{babel}

\usepackage[letterpaper,margin=2.4cm]{geometry}

\usepackage{xifthen}
\usepackage{ifthenx}

\usepackage{xargs}

\usepackage{graphicx}
\usepackage[table,svgnames,x11names]{xcolor}

\usepackage{amssymb}
\usepackage{amsmath}
\usepackage{amsthm}
\usepackage{mathtools}

\usepackage{exscale}
\usepackage{relsize}
\usepackage{bbm}
\usepackage{bm}
\usepackage{amsbsy}
\usepackage{mathdots}

\usepackage{mathrsfs}

\usepackage{stmaryrd}

\usepackage{mdframed}

\usepackage[all]{xy}

\usepackage{fp}

\usepackage[section]{placeins}
\usepackage{float}

\usepackage{enumerate}

\newcounter{proof}
{\stepcounter{proof}\begin{proof}}%
{\end{proof}}%
\newcounter{proofstep}[proof]
{\refstepcounter{proofstep}\bigskip\par\noindent%
  \ifthenelse{\isempty{#1}}
    {\textit{Step \theproofstep. }}
    {\textit{#1.}}
  \noindent}%
{\par}%
\newcounter{proofstep-noskip}[proof]
\newenvironment{proofstep-noskip}[1][]%
{\refstepcounter{proofstep-noskip}
  \ifnum\the\value{proofstep-noskip}>1
    \bigskip\par\noindent
  \fi
  \ifthenelse{\isempty{#1}}
    {\textit{Step \theproofstep-noskip. }}
    {\textit{#1.}}
  \noindent}%
{\par}%
\newcounter{proofcase}[proof]
{\refstepcounter{proofcase}\bigskip\par\noindent%
  \ifthenelse{\isempty{#1}}
    {\textit{Case \theproofcase. }}
    {\textit{#1.}}
  \noindent}%
{\par}%

\newcounter{proofcase-noskip}[proof]
\newenvironment{proofcase-noskip}[1][]%
{\refstepcounter{proofcase-noskip}
  \ifnum\the\value{proofcase-noskip}>1
    \bigskip\par\noindent
  \fi
  \ifthenelse{\isempty{#1}}
    {\textit{Case \theproofcase-noskip. }}
    {\textit{#1.}}
  \noindent}%
{\par}%

\usepackage{varioref}
\usepackage{hyperref}
\hypersetup{
  colorlinks,
  allcolors=DarkBlue
}
\usepackage[nameinlink]{cleveref}

\theoremstyle{plain}
\newtheorem{thm}{Theorem}[section]
\newtheorem*{thm*}{Theorem}
\newtheorem{pro}[thm]{Proposition}

\newtheorem{lem}[thm]{Lemma}

\theoremstyle{definition}

\theoremstyle{remark}

\numberwithin{equation}{section}

\AddToHook{env/pro/begin}{\crefalias{thm}{pro}} 
\AddToHook{env/cor/begin}{\crefalias{thm}{cor}}
\AddToHook{env/lem/begin}{\crefalias{thm}{lem}}
\AddToHook{env/question/begin}{\crefalias{thm}{question}}
\AddToHook{env/dfn/begin}{\crefalias{thm}{dfn}}
\AddToHook{env/ntn/begin}{\crefalias{thm}{ntn}}
\AddToHook{env/rem/begin}{\crefalias{thm}{rem}}
\AddToHook{env/clm/begin}{\crefalias{thm}{clm}}

\newcommandx{\textref}[2][1=]{\hyperref[#2]{#1\ref*{#2}}}
\newcommandx{\textrefp}[2][1=]{(\hyperref[#2]{#1\ref*{#2}})}

\newcommand{\vvvert}{{\vert\kern-0.25ex\vert\kern-0.25ex\vert}}

\usepackage{tikz}
\usetikzlibrary{trees}
\usepackage{tikz-cd}

\begin{document}

\title[Subspaces of $L^1$]%
{Subspaces of $L^1$ spanned by the even and odd levels of the Haar system}%

\author[Th.~Speckhofer]{Thomas Speckhofer}%
\address{Thomas Speckhofer, Department of Mathematics, Texas A\&M University, College Station, TX~77843, USA}
\email{speckhofer@tamu.edu}

\date{\today}%

\subjclass[2020]{%
  Primary 46E30; 
  Secondary 46B03, 
  46B25, 
  46B15. 
}

\keywords{Lebesgue space, Haar system, non-isomorphic subspaces, absolutely 2-summing operators.}%

\thanks{The author was supported by the Austrian Science Fund~(FWF), project 10.55776/J5020.}

\begin{abstract}
  We prove that the closed subspace of~$L^1[0,1]$ spanned by the even (respectively, odd) levels of the Haar system is not isomorphic to~$L^1[0,1]$.
\end{abstract}

\maketitle

\noindent\textbf{AI usage statement.}
The proof of the main theorem was generated by OpenAI's ChatGPT~6.0 Astra after being prompted to find an isomorphic invariant that can be used to establish the stated result. The author has checked the mathematical correctness of the proof and edited it for clarity, presentation, and context.


\section{Introduction}
\label{sec:introduction}

The Haar system and its properties in function spaces such as $L^p = L^p[0,1]$, $1\le p < \infty$, have been studied extensively throughout the last decades. A fundamental problem is to determine the isomorphic types of spaces generated by subsequences of the Haar system, i.e., of the closed linear span $[h_I : I\in \mathcal{A}]$, where $h_I$ is the Haar function supported on a dyadic interval $I\subset [0,1]$ and $\mathcal{A}$ is an infinite collection of dyadic intervals. In~1973, Gamlen and Gaudet~\cite{MR0328575} obtained a complete characterization of these isomorphic types in the case of $L^p$, $1 < p < \infty$, where the Haar system is unconditional: Consider $\limsup(\mathcal{A})$, the set of points contained in infinitely many intervals $I\in \mathcal{A}$. If $\limsup(\mathcal{A})$ has positive measure, then the subspace $[h_I : I\in \mathcal{A}]$ is isomorphic to~$L^p$, and otherwise, it is isomorphic to~$\ell^p$. The above classification problem has also been solved for the dyadic Hardy space~$H^1$ by Müller~\cite{MR879418} (see also~\cite{MR2157745}). There, the only isomorphic types that occur are $H^1$, $\ell^1$, and $(\sum H^1_n)_{\ell^1}$, where $H^1_n$ denotes the span in $H^1$ of the Haar system up to level~$n$.

If $\limsup(\mathcal{A})$ has positive measure, then the proof that $[h_I : I\in \mathcal{A}]$ is isomorphic to the underlying function space $X = L^p$, $1 < p < \infty$, or $H^1$, can be outlined as follows. One constructs a block basis of the Haar system in $[h_I : I\in \mathcal{A}]$ that is equivalent to the original Haar system and spans a complemented copy of~$X$. Since the Haar system is unconditional in the aforementioned spaces, $[h_I : I\in \mathcal{A}]$ itself is also complemented in~$X$, so by Pe{\l}czy\'{n}ski's decomposition method~\cite{MR126145}, the two spaces are isomorphic. If the underlying space is~$L^1$, then it is still possible to construct a complemented copy of~$L^1$ using block bases, but the decomposition method cannot be applied since the space $[h_I : I\in \mathcal{A}]$ need not be complemented in~$L^1$. For example, it is known to be uncomplemented if $\mathcal{A}$ consists of the even (or odd) levels of the Haar system (see~\cite{MR2224004}; see also~\cite{MR4098600}). In fact, it has been a long-standing open problem whether the closed subspace of~$L^1$ spanned by the even (or odd) levels of the Haar system is isomorphic to~$L^1$. This problem is recorded in~\cite{MR2224004}, where it is attributed to Schechtman. Moreover, it naturally appeared in the study of quasi-greedy basic sequences in~$L^1$ (see~\cite{MR4098600}).

In this paper, we give a negative answer to the above problem. Our main result is the following.

\begin{thm}\label{thm:main-result}
  Let $X_0$ and $X_1$ denote the closed linear spans of the even, respectively odd, levels of the Haar system in~$L^1$. Then $X_0$ and $X_1$ are not isomorphic to~$L^1$.
\end{thm}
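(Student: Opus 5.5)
We will separate $X_0$ (and likewise $X_1$) from $L^1$ by an isomorphic invariant phrased through absolutely $2$-summing operators, as the keywords suggest. The starting point is Grothendieck's theorem: every bounded operator $T\colon L^1\to \ell^2$ is absolutely $1$-summing, hence absolutely $2$-summing. Also, every bounded operator from $L^1$ into a Hilbert space factors through $\ell^1$. The invariant we aim for is a property of $L^1$ that $X_0$ should fail. \emph{If $Y\cong L^1$, then every operator $Y\to\ell^2$ is $2$-summing with $\pi_2(T)\le K_G\,d(Y,L^1)\,\|T\|$.} We will therefore try to construct bounded operators $T_n\colon X_0\to\ell^2$ with $\|T_n\|\le C$ but $\pi_2(T_n)\to\infty$, or even a single bounded $T$ that is not $2$-summing. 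In fact, since operators on $\ell^1$-sums are controlled, the failure of the Grothendieck property for $X_0$ is the natural target.

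The candidate operator is the square-function or coefficient-type map that pairs each even level with the odd level beneath it. For a dyadic interval $I$ of even level $2k$, let $I^+,I^-$ be its halves. Because only even levels are present, the Haar expansion of $f\in X_0$ restricted to $I$ is a function whose conditional expectations on the odd-level intervals $I^\pm$ carry no new Haar coefficient. We will define
\[ T f = \Bigl( |I|^{-1/2}\!\int_{I} f\,(h_{I^+}-h_{I^-})\,dt \Bigr)_{I} \]
or a similar family of functionals built from the missing odd-level Haar functions, suitably normalised. These functionals vanish on the generators of $X_0$ only in a way that is compatible with a martingale difference structure. We will check boundedness $X_0\to\ell^2$ by a Khintchine/Davis-type argument restricted to $X_0$, using that the missing odd levels make certain martingale transforms bounded on $X_0$ even though they are unbounded on $L^1$. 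This is the key asymmetry. We then test $\pi_2$ on the normalized Haar functions $x_I=|I|^{-1}h_I$ of a finite even-level tree of depth $n$. These satisfy $\sup_{\|\xi\|_{\infty}\le1}\|\sum \xi_I x_I\|$-type weak bounds, whereas $\sum\|Tx_I\|^2$ grows with $n$. That growth forces $\pi_2(T)\gtrsim\sqrt n$.

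The main obstacle is the boundedness of $T$ on $X_0$. We must find a functional family that is genuinely bounded from $X_0$ to $\ell^2$, which it cannot be on $L^1$ in a non-summing way, yet is not $2$-summing. I would first try to prove an $L^1$ estimate of the form $\|Tf\|_{\ell^2}\lesssim \|f\|_{1}$ for $f\in X_0$ by a stopping-time decomposition. This relies on the fact that on each even-level atom, $f$ is affine in the next Haar function. If a single operator proves too delicate, the fallback is the quantitative version: build finite-dimensional $T_n$ on $[h_I: I \text{ even, level}\le 2n]$ with $\|T_n\|\le C$ and $\pi_2(T_n)\ge c\sqrt n$. That suffices, since a putative isomorphism $X_0\cong L^1$ would bound all such ratios uniformly. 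The odd case $X_1$ is handled identically by shifting levels by one.
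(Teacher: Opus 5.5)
You have the same framework as the paper: by Grothendieck's theorem every operator $L^1\to\ell^2$ is absolutely $2$-summing, so it suffices to find a bounded operator $X_0\to\ell^2$ that is not. There is a genuine gap, though: the operator you propose is identically zero on $X_0$. The functional $f\mapsto\int_I f\,(h_{I^+}-h_{I^-})$ is a difference of two odd-level Haar coefficient functionals. These annihilate every even-level $h_J$ by orthogonality, and hence, by linearity and $L^1$-continuity, all of $X_0$. Your own remark that they vanish on the generators already forces $T=0$. The missing odd levels have to be used in the \emph{proof of boundedness}, not in the definition of $T$.

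The operator that works is the plain even-level coefficient map $Tf=\bigl(\int f h_I\bigr)_{I\in\mathcal{D}_{\mathrm{even}}}$, with $\|h_I\|_{L^\infty}=1$ and no normalizing factor; with a factor $|I|^{-1/2}$ it is unbounded, as $|J|^{-1}h_J$ shows. Its boundedness on $X_0$ is the heart of the proof, and your proposal only gestures at Khintchine/Davis or stopping-time arguments. The missing idea is an elementary telescoping estimate. For $f\in X_0$ and $I\in\mathcal{D}_{\mathrm{even}}$ with quarters $I_1,\dots,I_4$, the vanishing of $\int fh_{I^+}$ and $\int fh_{I^-}$ gives $\int_{I_1}f=\int_{I_2}f=\alpha$ and $\int_{I_3}f=\int_{I_4}f=\beta$. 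With $a_J=\int_J|f|$ one then has $a_I^2-\sum_j a_{I_j}^2=2\sum_{j<k}a_{I_j}a_{I_k}\ge 2|\alpha-\beta|^2=\tfrac12\bigl|\int fh_I\bigr|^2$. Summing over even levels telescopes to $\|Tf\|_{\ell^2}^2\le 2\|f\|_{L^1}^2$. On all of $L^1$ the same map is unbounded, e.g.\ on $4^n\chi_{[0,4^{-n})}$; this is exactly the asymmetry you were looking for.

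Your test vectors are also normalized incorrectly. For $x_I=|I|^{-1}h_I$ the weak $2$-summing norm is not uniformly bounded. The Rademacher function $g=\sum_{I\in\mathcal{D}^{2n}}h_I$ has $\|g\|_{L^\infty}=1$ and $\sum_{I\in\mathcal{D}^{2n}}|\int g\,x_I|^2=4^n$. This is of the same order as $\sum_{k\le n}\sum_{I\in\mathcal{D}^{2k}}\|Tx_I\|^2=\sum_{k\le n}4^k$, so the ratio stays bounded and nothing is proved. The right family is $e_I=|I|^{-1/2}h_I$. Any $x^*\in B_{X_0^*}$ extends to some $g$ with $\|g\|_{L^\infty}\le 1$, and Bessel's inequality gives $\sum_I|x^*(e_I)|^2\le\|g\|_{L^2}^2\le 1$. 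Meanwhile $Te_I=|I|^{1/2}\delta_I$ gives $\sum\|Te_I\|^2=N+1$ over the levels $0,2,\dots,2N$. With these corrections, your remark that $X_1$ is handled by shifting levels by one is fine, since the same quarter argument works for odd-level $I$. This would even bypass the paper's separate proof that $X_0\simeq X_1$.
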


We will prove \Cref{thm:main-result} in \Cref{sec:proof}. The proof is based on the fact that every bounded linear operator from~$L^1$ into~$\ell^2$ is absolutely $2$-summing, which is a consequence of the Grothendieck inequality~\cite{MR94682} (see \cite[Theorem~4.1]{MR231188}). We explicitly construct a bounded linear operator from $X_0$ into $\ell^2$ which is not absolutely $2$-summing; thus, $X_0$ cannot be isomorphic to $L^1$. Furthermore, in \Cref{sec:X0-X1-isomorphic}, we show that the subspace~$X_0$ is isomorphic to~$X_1$; hence, $X_1$ is not isomorphic to $L^1$ either.

Before presenting the proof of \Cref{thm:main-result}, we establish some notation and terminology.
Let $\mathbb{N}_0 = \mathbb{N}\cup \{ 0 \}$. For $n\in\mathbb{N}_0$, we let
\begin{equation*}
\mathcal{D}^n = \Big\{\Big[\frac{i-1}{2^n},\frac{i}{2^n}\Big):1\leq i\leq 2^n\Big\}\qquad \text{ and }\qquad \mathcal{D} = \bigcup_{n=0}^\infty\mathcal{D}^n,\quad \mathcal{D}_{\mathrm{even}} = \bigcup_{\substack{n=0\\ n\text{ even}}}^{\infty} \mathcal{D}^n.
\end{equation*}
We enumerate the collection $\mathcal{D}$ of dyadic intervals according to the lexicographical order, defined by $I<J$ whenever $|I|>|J|$, or $|I|=|J|$ and $\sup(I)\le \inf(J)$.  For each dyadic interval $I\in \mathcal{D}$, let~$I^+$ denote the left half of~$I$ and~$I^-$ its right half (both are again in $\mathcal{D}$).
The Haar system $(h_I)_{I\in\mathcal{D}}$ is defined by
\begin{equation*}
  h_I
  = \chi_{I^+} - \chi_{I^-},
  \qquad I\in\mathcal{D}.
\end{equation*}
Together with the constant function $\chi_{[0,1)}$, the Haar system in its lexicographic order is a monotone Schauder basis
of $L^p$, $1\le p<\infty$, and it is unconditional if and only if $1<p<\infty$. A diagonal operator with respect to the Haar system in $L^p$ is called a Haar multiplier. We denote the closed unit ball of a Banach space~$X$ by $B_X$. For a subset $F$ of a Banach space $X$, we denote by $[F]$ its closed linear span in~$X$.

\section{Proof of the main result}
\label{sec:proof}

\begin{proof}[Proof of \Cref{thm:main-result}]
  Since $X_0$ is isomorphic to $X_1$ (see \Cref{pro:X0-isomorphic-X1}), it suffices to prove the result for~$X_0$.
  First, we will show that
  \begin{equation*}
    T\colon X_0 \to \ell^2(\mathcal{D}_{\mathrm{even}}),  \qquad Tf = \Bigl( \int f h_I \Bigr)_{I\in \mathcal{D}_{\mathrm{even}}}
  \end{equation*}
  is a bounded linear operator with $\|T\|\le \sqrt{2}$.

  Fix $f\in X_0$. For an even-level dyadic interval $I\in \mathcal{D}_{\mathrm{even}}$, let $I_1,\dots,I_4$ denote its four consecutive subintervals of length $|I|/4$. Since all odd-level Haar coefficients of~$f$ are zero, there exist scalars $\alpha,\beta$ such that
  \begin{equation*}
    \int_{I_1} f = \int_{I_2} f = \alpha\qquad \text{and} \qquad \int_{I_3} f = \int_{I_4} f = \beta.
  \end{equation*}
  Write $a_J = \int_J |f|$ for $J\in \mathcal{D}$. Then
  \begin{align*}
    a_I^2 - \sum_{j=1}^4 a_{I_j}^2 &= 2 \sum_{1\le j < k \le 4} a_{I_j}a_{I_k}\\
                                   &\ge 2\bigl(|\alpha|^2 + |\beta|^2 + 4|\alpha| |\beta|\bigr)\\
    &\ge 2 |\alpha - \beta|^2 = \frac{1}{2} \Bigl| \int f h_I \Bigr|^2.
  \end{align*}
  We can now apply the above inequality to $\|Tf\|_{\ell^2(\mathcal{D}_{{\mathrm{even}}})}$ to obtain a telescoping sum: For every $N\in \mathbb{N}_0$, we have
  \begin{equation*}
    \sum_{n=0}^N \sum_{I\in \mathcal{D}^{2n}} \Bigl| \int f h_I \Bigr|^2
    \le 2 \Bigl( \|f\|_{L^1}^2 - \sum_{I\in \mathcal{D}^{2N+2}} a_I^2 \Bigr)
    \le 2 \|f\|_{L^1}^2.
  \end{equation*}
  Letting $N\to \infty$, we obtain $\|T\|\le \sqrt{2}$.

  It follows from the Grothendieck inequality~\cite{MR94682} that every bounded linear operator from~$L^1$ into~$\ell^2$ is absolutely $1$-summing and hence absolutely $2$-summing (see \cite[Theorem~4.1]{MR231188}). We prove that $T$ is not absolutely $2$-summing; hence, its domain~$X_0$ cannot be isomorphic to~$L^1$.

  For $I\in \mathcal{D}_{\mathrm{even}}$, let
  \begin{equation*}
    e_I = |I|^{-1/2}h_I\in X_0.
  \end{equation*}
  Thus, $(e_I)_{I\in \mathcal{D}_{\mathrm{even}}}$ is an orthonormal sequence in~$L^2$.
  For every finite subcollection $\mathcal{C}\subset \mathcal{D}_{\mathrm{even}}$, we claim that
  \begin{equation}\label{eq:1}
    \sup_{x^{*}\in B_{X_0^{*}}} \Bigl( \sum_{I\in \mathcal{C}} |x^{*}(e_I)|^2 \Bigr)^{1/2}\le 1.
  \end{equation}
  Indeed, by Hahn-Banach, any functional $x^{*}\in B_{X_0^{*}}$ can be represented by some function $g\in L^{\infty}$ with $\|g\|_{L^{\infty}}\le 1$, and by Bessel's inequality, the expression inside the supremum in~\eqref{eq:1} is bounded by $\|g\|_{L^2}\le \|g\|_{L^{\infty}}\le 1$.

  On the other hand, we have
  \begin{equation*}
    Te_I = |I|^{1/2} \delta_I,
  \end{equation*}
  where $\delta_I$ denotes the standard basis vector in~$\ell^2(\mathcal{D}_{\mathrm{even}})$ supported at $I$. Hence, for $N\in \mathbb{N}_0$ and $\mathcal{C} = \bigcup_{n=0}^N \mathcal{D}^{2n}$, we obtain
  \begin{equation*}
    \sum_{I\in \mathcal{C}} \|T e_I\|_{\ell^2(\mathcal{D}_{\mathrm{even}})}^2 = \sum_{n=0}^N \sum_{I\in \mathcal{D}^{2n}} |I| = N + 1.
  \end{equation*}
  Together with~\eqref{eq:1}, this implies that $T$ is not absolutely $2$-summing.
\end{proof}

\section{$X_0$ and $X_1$ are isomorphic}
\label{sec:X0-X1-isomorphic}

  For completeness, we provide a proof of the fact that the two subspaces $X_0$ and $X_1$ of~$L^1$ are isomorphic. To the author's knowledge, this has not previously appeared in the literature. We will use the following elementary observation.

  \begin{lem}\label{lem:haar-multiplier}
    Let $d_I\in \{ 0,1 \}$ for each $I\in \mathcal{D}$, and assume that whenever $d_I = 0$, we also have $d_{I^+} = d_{I^-} = 0$. Then the linear extension of $Dh_I = d_I h_I$, $I\in \mathcal{D}$, defines a bounded Haar multiplier on $L^1_0 := \{ f\in L^1 : \int f = 0 \}$ with $\|D\|\le 1$.
  \end{lem}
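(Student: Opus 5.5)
We want to show that $D$ is a contraction on $L^1_0$. The approach is to identify $D$ as a conditional expectation with respect to a stopping time.

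Let $\mathcal{S}=\{I\in\mathcal{D}: d_I=1\}$. The hypothesis says that $\mathcal{D}\setminus\mathcal{S}$ is closed under passing to children, so $\mathcal{S}$ is closed under passing to the dyadic parent. We may assume $[0,1)\in\mathcal{S}$; otherwise $\mathcal{S}=\emptyset$ and $D=0$ on $L^1_0$. Define the stopping time
\begin{equation*}
  \tau(x)=\min\{n\in\mathbb{N}_0 : \text{the } I\in\mathcal{D}^n \text{ with } x\in I \text{ satisfies } I\notin\mathcal{S}\}\in\mathbb{N}_0\cup\{\infty\}.
\end{equation*}
Because $\mathcal{S}$ is parent-closed, $x\in I\in\mathcal{D}^n$ with $I\in\mathcal{S}$ holds exactly when $\tau(x)>n$. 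The event $\{\tau>n\}$ is determined by the dyadic $\sigma$-algebra $\mathcal{F}_n$, so $\tau$ is a stopping time for the filtration $(\mathcal{F}_n)$.

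For a finite Haar polynomial $f=\sum_I c_I h_I$, consider the martingale $f_n=\mathbb{E}(f\mid\mathcal{F}_n)=\sum_{|I|>2^{-n}}c_I h_I$. Its stopped version is
\begin{equation*}
  f_{\tau\wedge n}=\sum_{k<n}\chi_{\{\tau>k\}}\,(f_{k+1}-f_k)=\sum_{|I|>2^{-n}} c_I\,\chi_{\{\tau>\log_2(1/|I|)\}}\,h_I .
\end{equation*}
Since $h_I$ is supported on $I$, the factor $\chi_{\{\tau>\log_2(1/|I|)\}}h_I$ equals $h_I$ if $I\in\mathcal{S}$ and $0$ otherwise. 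Taking $n$ larger than the levels occurring in $f$, we get $Df=f_{\tau\wedge n}$.

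By optional stopping for bounded stopping times, $f_{\tau\wedge n}=\mathbb{E}(f_n\mid\mathcal{F}_{\tau\wedge n})$. Conditional expectations are $L^1$-contractions, so
\begin{equation*}
  \|Df\|_1\le\|f_n\|_1=\|f\|_1 .
\end{equation*}
Finite Haar polynomials with zero mean are dense in $L^1_0$ (the Haar system together with the constant is a basis of $L^1$), so $D$ extends to an operator on $L^1_0$ with $\|D\|\le1$. Alternatively, one can avoid conditional expectations on $\mathcal{F}_\tau$: write $Df=f_{\tau\wedge n}$ and check directly that $\int_A |f_{\tau\wedge n}|\le\int_A|f_n|$ on each atom $A$ of $\mathcal{F}_{\tau\wedge n}$, using that on such an atom $f_{\tau\wedge n}$ is the average of $f_n$.

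The only step requiring care is the identification $Df=f_{\tau\wedge n}$, specifically that the parent-closed hypothesis makes $\{\tau>k\}$ coincide with the union of the intervals of $\mathcal{S}$ at level $k$. That hypothesis is exactly what makes $\tau$ a stopping time, and the rest follows.
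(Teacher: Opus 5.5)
Your proof is correct, but it takes a different route from the paper.

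The paper's approach is one-step symmetrization. If $g$ is constant on $I$, then $g+a_Ih_I$ and $g-a_Ih_I$ are equimeasurable (swap $I^+$ and $I^-$), so
$\|g\|_{L^1}\le\frac12\bigl(\|g+a_Ih_I\|_{L^1}+\|g-a_Ih_I\|_{L^1}\bigr)=\|g+a_Ih_I\|_{L^1}$.
Starting from $Df$, the paper then repeatedly adds back the discarded terms $a_Ih_I$ (those with $d_I=0$). Taking them in lexicographic order, from coarse to fine, the hypothesis ensures that no Haar function supported strictly inside the next interval $I$ has been added yet. So the current function is constant on $I$, and the norm never decreases.

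You instead get the whole inequality at once by recognizing $Df=f_{\tau\wedge n}=\mathbb{E}(f_n\mid\mathcal{F}_{\tau\wedge n})$. The hypothesis enters exactly as the statement that $\tau$ is a stopping time.

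Your route gives a structural description of $D$: it is a conditional expectation, and in the limit $D=\mathbb{E}(\cdot\mid\mathcal{F}_\tau)$ on $L^1_0$. Contractivity on every $L^p$, $1\le p\le\infty$, then comes for free. The price is invoking optional sampling, although your atom-by-atom check $\int_A|f_{\tau\wedge n}|=\bigl|\int_A f_n\bigr|\le\int_A|f_n|$ already makes it fully elementary.

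The paper's route uses only equimeasurability and convexity of the norm, so it transfers verbatim to any rearrangement-invariant norm. Its cost is that the removed coefficients must be put in a suitable order and handled by induction.
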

  \begin{proof}
    This can be seen by repeatedly applying the following fact: If $I\in \mathcal{D}$, $a_I$ is a scalar, and $f\in L^1_0$ is constant on~$I$, then $f + a_I h_I$ and $f - a_I h_I$ have the same distribution, so
    \begin{equation*}
       \|f\|_{L^1} \le \frac{1}{2}\bigl(\|f + a_I h_I\|_{L^1} + \|f - a_Ih_I\|_{L^1}\bigr) = \|f + a_Ih_I\|_{L^1}.\qedhere
    \end{equation*}
  \end{proof}
  
  \begin{pro}\label{pro:X0-isomorphic-X1}
    The space $X_0$ is isomorphic to $X_1$.
  \end{pro}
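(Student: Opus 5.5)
The plan is to decompose both spaces into pieces supported on disjoint dyadic subtrees, show that $X_1 \simeq X_0\oplus X_0$, and then show that $X_0\simeq X_0\oplus X_0$. Projections onto collections of Haar functions will be controlled by \Cref{lem:haar-multiplier}, and pieces on disjoint intervals will be identified by dilation.

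\emph{Step 1: $X_1\simeq X_0\oplus_1 X_0$.} Every $f\in X_1$ has no $h_{[0,1)}$ coefficient. So, whenever $f$ is a finite Haar sum, $f\chi_{[0,1/2)}$ is the sum of its Haar terms with $I\subset[0,1/2)$, and similarly for $[1/2,1)$. The map $f\mapsto (f\chi_{[0,1/2)}, f\chi_{[1/2,1)})$ is then an isometry of $X_1$ onto $X_1^{L}\oplus_1 X_1^{R}$. Here $X_1^{L}$ is the closed span of the $h_I$ with $I\subset[0,1/2)$ of odd level, and $X_1^R$ is defined analogously on $[1/2,1)$. The dilation $g\mapsto 2g(2\,\cdot)$ maps $L^1[0,1)$ isometrically onto $L^1[0,1/2)$ and sends $h_J$ with $J$ of level $n$ to a multiple of $h_{J/2}$, which has level $n+1$. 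Hence $X_0$ is mapped isometrically onto $X_1^L$, and likewise onto $X_1^R$ after a translation. Thus $X_1\simeq X_0\oplus_1 X_0$.

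\emph{Step 2: a self-similar decomposition of $X_0$.} Consider the chain $C=\{[0,4^{-k}) : k\ge 0\}\subset\mathcal{D}_{\mathrm{even}}$, which is upward closed in the tree $\mathcal{D}_{\mathrm{even}}$. Define $d_I=1$ for $I\in C$ and also for all odd-level $I\subset[0,4^{-k})$ with $I\supsetneq[0,4^{-k-1})$, so that the chain is connected in $\mathcal{D}$; set $d_I=0$ otherwise. With this choice the hypothesis of \Cref{lem:haar-multiplier} holds. Restricted to $X_0\subset L^1_0$, the lemma gives a contractive projection $P$ of $X_0$ onto $Z:=[h_I : I\in C]$, and $\mathrm{Id}-P$ is bounded onto the span of the remaining even-level Haar functions.

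Those remaining functions split into the subtrees rooted at the three non-chain quarters $J$ of each chain interval $[0,4^{-k})$. These roots $J$ are pairwise disjoint intervals, and each subtree consists of $h_{J'}$ with $J'\subset J$ of level $\equiv$ level of $J \pmod 2$. The spans have disjoint supports and mean zero on each $J$, so their closed sum is isometrically an $\ell^1$-sum. By dilation, as in Step~1, each summand is isometric to $X_0$. Hence
\begin{equation*}
X_0\simeq Z\oplus\Bigl(\sum_{j=1}^\infty X_0\Bigr)_{\ell^1}.
\end{equation*}

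\emph{Step 3: absorption.} Write $W=(\sum X_0)_{\ell^1}$. Clearly $W\simeq X_0\oplus W$, so
\begin{equation*}
X_0\oplus X_0\simeq Z\oplus W\oplus X_0\simeq Z\oplus W\simeq X_0.
\end{equation*}
Combining this with Step~1 gives $X_1\simeq X_0$. This argument does not need to identify $Z$; in fact $Z$ is presumably isomorphic to $\ell^1$, but that is irrelevant.

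The main obstacle is Step~2. One has to check carefully that the coefficient sequence $d_I$ satisfies the hereditary hypothesis of \Cref{lem:haar-multiplier}, so that the intermediate odd-level intervals are handled correctly. One must also check that the complementary piece really is an $\ell^1$-sum of dilated copies of $X_0$ with uniform constants. Everything else reduces to dilation invariance and the elementary Pełczyński-type absorption above.
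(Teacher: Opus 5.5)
Your proof is correct and follows the paper's argument. It uses the same splitting $X_1\simeq X_0\oplus X_0$, the same Haar multiplier from \Cref{lem:haar-multiplier} (your $d_I$ equals $1$ exactly when $\inf I=0$), and the same decomposition of the complement into an $\ell^1$-sum of dilated copies of $X_0$. The only difference is that you absorb a copy of $X_0$ directly into $W$ to get $X_0\simeq X_0\oplus X_0$, whereas the paper first checks that the span of $h_{[0,4^{-k})}$, $k\ge 0$, is isomorphic to $\ell^1$ in order to obtain $X_0\simeq\ell^1(X_0)$; your route makes that identification unnecessary.
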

  \begin{proof}
    By splitting the unit interval $[0,1]$ into its halves and restricting any function in $X_1$ to the subintervals $[0,1/2)$ and $[1/2,1)$, we see that $X_1 \simeq X_0 \oplus X_0$. Analogously, we have $X_0\simeq \langle\{ h_{[0,1)} \}  \rangle \oplus X_1\oplus X_1$. Thus, it suffices to prove that $X_0\simeq \ell^1(X_0)$; by Pe{\l}czy\'{n}ski's decomposition method~\cite{MR126145}, it then follows that $X_0\simeq X_1$.

    To this end, put $I^{(n)} = [0,4^{-n})$ for all $n\in \mathbb{N}_0$. Then $Y := [h_{I^{(n)}} : n\in \mathbb{N}_0]$ is complemented in $X_0$ by a Haar multiplier of the form as in \Cref{lem:haar-multiplier} (with $d_I = 1$ if $\inf I = 0$ and $d_I = 0$ otherwise), restricted to $X_0$. Moreover, a straightforward computation shows that $(4^nh_{I^{(n)}})_{n=0}^{\infty}$ is equivalent to the unit vector basis of $\ell^1$, so $Y \simeq \ell^1$. On the other hand, consider the complement
    \begin{equation*}
      Z = \bigl[h_L : L\in \mathcal{D}_{\mathrm{even}} \setminus \{ I^{(n)} : n\in \mathbb{N}_0 \}\bigr].
    \end{equation*}
    For every $n\in \mathbb{N}_0$, the set $I^{(n)}$ can be written as a disjoint union of its four quarters $I^{(n)} = I^{(n+1)}\cup I^{(n)}_2 \cup I^{(n)}_3 \cup I^{(n)}_4$. The norm of any function $f\in Z$ satisfies
    \begin{equation*}
      \|f\|_{L^1} = \sum_{n=0}^{\infty} \sum_{k=2}^4 \|f\chi_{I^{(n)}_k}\|_{L^1},
    \end{equation*}
    and for every $n\in \mathbb{N}_0$ and $k = 2,3,4$, the closed span $[h_L : L\in \mathcal{D}_{\mathrm{even}},\, L\subset I^{(n)}_k]$ is isometrically isomorphic to $X_0$ via the usual affine dilation from $I^{(n)}_k$ onto $[0,1)$. Thus, we conclude that $Z \simeq \ell^1(X_0)$, which implies that $X_0\simeq \ell^1\oplus \ell^1(X_0)$ and hence $X_0 \simeq \ell^1(X_0)$.
  \end{proof}

\noindent\textbf{Acknowledgments.} The author would like to thank Pavlos Motakis and Thomas Schlumprecht for valuable comments and discussions.

\bibliographystyle{plain}%
\bibliography{bibliography}%

\end{document}